\documentclass[12pt]{article}
\usepackage{amsmath, amssymb,tikz,cancel,graphicx,amsthm}
\usepackage{color,enumitem}
\usetikzlibrary{shapes, patterns}
\usepackage{enumitem}
\usetikzlibrary{shapes, patterns}
\usepackage[width=170mm,height=230mm,left=25mm,foot=10mm]{geometry}

\newcommand{\startClaims}{\setcounter{clm}{0}}
\newtheorem{obs}{Observation}
\newtheorem{thm}[obs]{Theorem}
\newtheorem{clm}{Claim}

\newcommand{\ignore}[1]{}

\newcommand{\apriltwothree}[1]{#1}
\newcommand{\apriltwoseven}[1]{#1}
\newcommand{\apriltwosix}[1]{#1}
\newcommand{\mayfive}[1]{#1}
\newcommand{\maysix}[1]{#1}

\begin{document}
	  \begin{center}
		{\bf{\Large Remarks on the structure of simple drawings of $K_n$}}
		\\    
		\vspace*{1cm}
		{\large R. Bruce Richter\footnote{Supported by NSERC Grant 50503-10940-500} and Matthew Sullivan}
		\\
		\vspace{0.5cm}
		{\it University of Waterloo,  Waterloo, On N2L 3G1, Canada}
		\\
		\texttt{brichter,m8sulliv@uwaterloo.ca}
		\\
		\today
		
	\end{center}
	\pagenumbering{arabic}

\begin{abstract}
\apriltwosix{In studying properties of simple drawings of the complete graph in the sphere, two natural questions arose for us:  can an edge have multiple segments on the boundary of the same face? and is each face the intersection of sides of 3-cycles?  The second is asserted to be obvious in two previously published articles, but when asked, authors of both papers were unable to provide a proof.  We present a proof.   The first is quite easily proved and the technique yields a third, even simpler, fact:  no three edges at a vertex all have internal points incident with the same face.}
\end{abstract}

\section{Introduction}

In our study of simple drawings of the complete graph in the sphere, two basic questions arose for us:  

\begin{itemize}[topsep=0pt,itemsep=-6pt]
\item Can an edge have multiple segments on the boundary of a face? 
\item Is each face the intersection of sides of 3-cycles?
\end{itemize}  

It turns out that \cite{1,2} both assume a positive answer to the second question.  When we submitted queries (including the authors of both articles), no one was able to provide a proof to the second question. 

For the first question, we had a fairly simple proof.  The one we give here incorporates ideas from both Aichholzer and Kyn\v cl (both of whom had thought about this question before).

In our drawings, each vertex of $K_n$ is a distinct point of the \apriltwosix{sphere $\mathbb S^2$ (or plane}; in this work there is no essential difference), while each edge is represented by an arc joining the points representing its incident vertices.  No edge has a vertex-point in its interior and any two edge-arcs have finitely many intersections, all of which are \apriltwosix{either common endpoints or} crossings.  We do not distinguish between a vertex or edge and the point or arc that represents it in the drawing.

We are concerned with {\em simple\/} drawings, in which any two closed edges have at most one point in common:  either a vertex or a crossing. \apriltwosix{(These are also known as ``good drawings"; there is a growing use of ``simple" as a more descriptive adjective.)}  A {\em face\/} of a drawing $D$ of $K_n$ in $\mathbb S^2$ is a component of $\mathbb S^2\setminus D[K_n]$.

One fact that we need \cite{3} is that, for $n\ge 3$,  each face of $D$ is an open disc whose closure is a closed disc.  Its boundary is a simple closed curve.

Our two theorems are the following\apriltwoseven{, proved in Sections \ref{sec:connectedIntersection} and \ref{sec:regionsTriangles}, respectively.}

\begin{thm}\label{th:connectedIntersection} Let $n\ge 3$ and let $C$ be the simple closed curve bounding a  face of a simple drawing of $K_n$ in the sphere.  If $e$ is any closed edge of $K_n$, then $e\cap C$ is either connected or consists of the two vertices incident with $e$.
\end{thm}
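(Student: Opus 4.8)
The plan is to work in the closed disc complementary to the face. Since $F$ is a face it contains no point of the drawing, so in particular $e\subseteq \mathbb{S}^2\setminus F$. Writing $\bar O=\mathbb{S}^2\setminus F$, this is a closed disc with $C=\partial\bar O$, and every point of $e$ that is not on $C$ lies in the open disc $O$. Thus $e$ is an arc contained in $\bar O$ and $e\cap C$ is exactly the part of $e$ lying on the boundary circle; the whole problem becomes one about how an arc in a disc can meet the boundary.

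First I would pin down the local structure of $e\cap C$, using two features of simple drawings (and assuming, as usual, that no three edges pass through a common point). Because distinct edges share at most one point, $e$ cannot run along an arc of $C$ carried by a \emph{different} edge; hence every nondegenerate arc of $e\cap C$ is a sub-arc of $e$ itself. Because crossings are transversal, an isolated interior point $x$ of $e\cap C$ would be a transversal crossing of $e$ with the edge carrying $C$ near $x$, forcing $e$ to pass from $O$ into $F$ — impossible, since $e\cap F=\emptyset$. Consequently each component of $e\cap C$ is either a maximal sub-arc of $e$ lying on $C$ or one of the endpoints $u,v$, and at an interior-of-$e$ end of such an arc, $C$ turns off $e$ onto a genuinely different edge while $e$ dips into $O$.

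Now suppose $e\cap C$ is disconnected. If it contains no arc then it is a subset of $\{u,v\}$ and, being disconnected, equals $\{u,v\}$ — the exceptional case. So it suffices to rule out an arc component $A$ coexisting with a further component. Along $A$ I would pick an end $p$ interior to $e$ beyond which $e$ returns to $C$, let $\sigma$ be the maximal sub-arc of $e$ just past $p$ lying in $O$ (ending where $e$ next meets $C$, at a point $q$), and note that $\sigma$ is a chord of $\bar O$ meeting $C$ only at $p,q$. Together with the sub-arc $\eta$ of $C$ from $p$ to $q$ it bounds a disc $\Delta\subseteq\bar O$, and at least one endpoint of $e$, say $u$, lies outside $\Delta$ (as $e$ is simple it cannot recross $\sigma$). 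The mechanism I would try is a bigon: if $\eta$ carries a vertex $w$ of $K_n$ in its interior, then near $w$ the only side of $C$ inside $\bar O$ is the $\Delta$-side, so the edge $uw$ leaves $w$ into $\Delta$; since $u$ lies outside $\Delta$ it must cross $\partial\Delta=\sigma\cup\eta$, it cannot cross $\sigma\subseteq e$ (because $uw$ and $e$ share the vertex $u$), and in the shortest situation the edges of $\eta$ are all incident to $w$, hence to $uw$, so $uw$ cannot cross $\eta$ either — a contradiction.

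The hard part will be upgrading this to an arbitrary $\eta$, and I expect it to be the real crux. The difficulty is genuine: topologically an arc in a disc may touch the boundary in many components, and the configuration is even \emph{locally} consistent — the disc $\Delta$ could perfectly well be a single face $F'$ with $e$ merely bordering both $F$ and $F'$ — so no purely local or Jordan-curve argument can suffice. Moreover $\eta$ need not contain any vertex at all (its internal junctions may all be crossings), which already defeats the naive bigon. The obstruction must therefore be global and, I suspect, must exploit completeness of $K_n$: the abundance of edges from $u$ and $v$ to vertices that the configuration traps should force a repeated crossing of some edge with $e$, or a crossing of $e$ by an edge adjacent to it. Making one such trapping argument work uniformly — for long $\eta$, for vertex-free $\eta$, and irrespective of how many components $e\cap C$ has — is where the essential work, and the main risk of a gap, lies.
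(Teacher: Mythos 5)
Your setup is sound: working in the closed disc $\bar O$ complementary to the face, the local analysis showing that each component of $e\cap C$ is a sub-arc of $e$ or an endpoint of $e$, and the reduction to ruling out an arc component coexisting with a second component all match what is needed. Your bigon argument does dispose of the case where $\eta$ contains a vertex $w$ in its interior; in fact it works more simply than you state, since no edge can cross $C$ at all (a transversal crossing of $C$ at an interior point of an edge-segment would put the edge inside the face), so $uw$ can never leave $\Delta$ and there is no need for the assumption that the edges carrying $\eta$ are incident with $w$. But, as you yourself flag, the case where $\eta$ is vertex-free is not handled, and that is a genuine gap rather than a technicality: when every junction interior to $\eta$ is a crossing, your argument produces no edge trapped in $\Delta$ and hence no contradiction.

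The paper closes exactly this gap with one idea your proposal does not contain. Look at the point $x$ where $e$ leaves $C$ at the far end of a non-trivial interval $I$ of $e\cap C$ (your point $p$). After a small perturbation ensuring no three edges are concurrent, $x$ is a crossing of $e$ with a single edge $f$, and the four endpoints of $e$ and $f$ induce a sub-drawing of $K_4$ --- this is where completeness of $K_n$ enters --- having exactly one crossing, namely $x$. The face $R$ bounded by $C$ lies inside one face of this $K_4$ sub-drawing, and that face is incident with all of $I$. Inspecting the one-crossing drawing of $K_4$, the closure of each of its faces meets $e$ in a connected set, which forces $C\cap e\subseteq I\cup\{v\}$, where $v$ is the endpoint of $e$ on the near side of $I$, chosen with $v\notin C$ whenever possible; one then concludes $C\cap e=I$, or else the two-vertex exception. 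This $K_4$ device is uniform: it needs no vertex on $\eta$, no hypothesis on the length or composition of $\eta$, and no accounting of how many components $e\cap C$ has --- precisely the uniformity you correctly predicted would be the crux. So your proposal is a correct reduction plus a correct special case, but the central mechanism of the proof is missing.
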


\begin{thm}\label{th:regionsTriangles} Let $n\ge 3$ and let $R$ be a face of a simple drawing $D$ of $K_n$ in the sphere.  For each 3-cycle $T$ of $K_n$, let $S^R_T$ denote the side of $T$ in $D$ that contains $R$.  Then $R=\bigcap S^R_T$, where the intersection is over all 3-cycles in $K_n$.
\end{thm}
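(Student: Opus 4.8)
The plan is to prove the non-trivial inclusion $\bigcap_T S^R_T \subseteq R$ by reducing it to a pairwise separation statement and then inducting on $n$. First I would record the easy facts. Each $3$-cycle $T$ is drawn as a simple closed curve (its three edges pairwise meet only at a shared vertex), so it has two sides; since $R$ is connected and disjoint from $D[T]$, it lies in exactly one of them, namely $S^R_T$, giving $R\subseteq \bigcap_T S^R_T$. Moreover a point on an edge $uv$ lies on $D[uvw]$ for every third vertex $w$, hence on no open side $S^R_{uvw}$; so $\bigcap_T S^R_T$ is an open set disjoint from $D[K_n]$, i.e.\ a union of faces. As every face is connected and misses each $D[T]$, it lies wholly inside or wholly outside $S^R_T$, so $\bigcap_T S^R_T$ is precisely the union of the faces $F$ with $F\subseteq S^R_T$ for all $T$. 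Thus the theorem is equivalent to: \emph{for every face $R'\neq R$ some $3$-cycle $T$ has $R$ and $R'$ on opposite sides}.

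I would prove this separation statement by induction on $n$, the base case $n=3$ being immediate (one triangle, two faces). For the inductive step, delete a vertex $v$ together with its incident edges to obtain a simple drawing of $K_{n-1}$, and let $\tilde R$ be the face of this smaller drawing that contains $R$. Each $3$-cycle $T$ avoiding $v$ is a cycle of $K_{n-1}$ drawn by the same curve in both drawings, so $S^R_T=S^{\tilde R}_T$; the inductive hypothesis applied to $\tilde R$ then gives $\bigcap_{T\not\ni v} S^R_T=\tilde R$. Consequently $\bigcap_T S^R_T=\tilde R\cap\bigcap_{T\ni v}S^R_T$, and it remains to show that the cycles through $v$ cut $R$ out of $\tilde R$; equivalently, that every face contained in $\tilde R$ but different from $R$ is separated from $R$ by some triangle $vxy$.

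Inside the disc $\tilde R$ the only edges of $D$ present are arcs of the star of $v$, and by simplicity any two edges at $v$ meet only at $v$, so these arcs are pairwise disjoint (a fan emanating from $v$ when $v\in\tilde R$). They cut $\tilde R$ into the faces of $D$ it contains, and Theorem~\ref{th:connectedIntersection} guarantees that each such face meets each star edge in at most one arc. If $v\in\tilde R$, the picture is a fan of arcs from $v$, the face $R$ is the sector between two consecutive edges $vx,vx'$, and the single triangle $vxx'$, whose boundary near $v$ is exactly the two sides of that sector, places every other sector on its far side; this disposes of the case at once.

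The crux is the case $v\notin\tilde R$, where the chords form a disjoint family whose region-adjacency is a tree. Here a triangle $vxy$ separates $R$ from another region $R^*$ precisely when a path from $R$ to $R^*$ crosses the chords of $vx$ and of $vy$ with total odd parity (crossing a chord of any other star edge does not flip $vxy$, and the edge $xy$ does not enter the interior of $\tilde R$). Hence one fails to find a separating $vxy$ only if $R^*$ is reached from $R$ by crossing an \emph{odd} number of chords of \emph{every} edge at $v$. Ruling out such an ``antipodal'' region is the main obstacle I expect: my intended remedy is to take a minimal-crossing, bigon-free path from $R$ to $R^*$ and invoke Theorem~\ref{th:connectedIntersection}, which forbids any intermediate face from being incident to two arcs of a single star edge and should thereby prevent the crossing parities from all coinciding.
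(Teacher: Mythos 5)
Your high-level frame coincides with the paper's: induct on $n$, delete a vertex $v$, use the inductive hypothesis to reduce everything to the face $\tilde R$ of $D-v$ containing $R$, and then show that triangles through $v$ separate $R$ from every other face of $D$ inside $\tilde R$. The reduction and the case $v\in\tilde R$ are fine as far as they go (your observation that the single triangle $vxx'$ suffices there is even a bit slicker than the paper's). But there are two genuine gaps, one of which you acknowledge. First, both of your pictures --- the ``fan'' when $v\in\tilde R$ and the disjoint chord family when $v\notin\tilde R$ --- need the statement: \emph{each edge at $v$ meets $\tilde R$ in at most one arc}. You cite Theorem \ref{th:connectedIntersection} for this, but that theorem concerns faces of $D$, whereas $\tilde R$ is a face of $D-v$, about which Theorem \ref{th:connectedIntersection} says nothing. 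This is exactly the paper's Claim \ref{cl:atMostOneComponent}, and it needs its own argument (the paper follows $e$ to its first exit from $\tilde R$, at a crossing with an edge $f$ of $D-v$, and uses the triangle formed by $f$ and the far end of $e$, together with the $K_4$ induced by $e$ and $f$, to show $e$ cannot re-enter). Without it your parity dichotomy is also wrong: a face $R^*$ could in principle be cut off from $R$ by \emph{two} chords of a single star edge, so that every edge at $v$ has even crossing parity and no triangle through $v$ separates --- a failure mode that your ``only if odd for every edge'' claim misses entirely.

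Second, and fatally, the case you yourself flag as ``the main obstacle'' --- the antipodal region, reached by crossing one chord of every edge at $v$ --- is precisely where the real content of the theorem lies, and your intended remedy cannot close it. Theorem \ref{th:connectedIntersection} is consistent with the antipodal configuration: if every edge at $v$ has exactly one chord and the chords are nested around $R$, then no face inside $\tilde R$ is incident with two arcs of any one edge, so no appeal to Theorem \ref{th:connectedIntersection} or to bigon-free minimal paths can rule it out. The paper needs two substantive lemmas here. Claim \ref{cl:twoCrossing}: the chords of any two edges $e,f\in X$ are nested on $C_{n-1}$ (cyclic order $u^e_1,u^e_2,u^f_2,u^f_1$), proved by showing that otherwise the edge $w^ew^f$ of $D-v$ would have to cross a closed curve built from pieces of $e$, $f$ and an arc inside $\tilde R$, which simplicity and the fact that $\tilde R$ is a face of $D-v$ forbid. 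Claim \ref{cl:XnotAll}: not every edge at $v$ can send a chord into $\tilde R$, i.e.\ $|X|<n-1$, proved by another nontrivial simplicity argument involving the edge of $D-v$ through the extreme crossing point $u^{e_1}_1$. Only then does a vertex $w$ with $vw\notin X$ exist, and the triangle induced by $e_1$ and $w$ separates the extreme region from the rest of $\tilde R$ --- in your language, $vw$ is the guaranteed even-parity edge that pairs with an odd-parity one to give a separating triangle. These two arguments are the heart of the proof, and your proposal does not contain them.
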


\apriltwoseven{The quite elementary inductive proof of Theorem \ref{th:regionsTriangles} is the main contribution of this work.  However, given that this result seemed obvious to earlier authors, it is certainly 
natural to wonder whether there is a very direct proof.}

\mayfive{We appreciate our private exchanges with Jan Kyn\v cl for several observations and for references \cite{bfk,molnar}.  For example, he observed that our inductive proof is quite similar to the proof in Balko et al \cite{bfk} of their version of Carath\'eodory's Theorem:  if $p$ is a point of the plane in a bounded face of a simple drawing of $K_n$, then $p$ is in the interior of a 3-cycle.  They further prove that the union of the bounded faces is covered by at most $n-2$ interiors of 3-cycles.}  

\mayfive{Of related interest, Moln\'ar proved a Helly-type theorem for sets of closed discs in the plane whose boundary curves intersect finitely \cite{molnar}:  if the intersection of every two discs in the set has non-empty connected interior, then the intersection of any $k\ge 1$ discs in the set has non-empty connected interior.  \maysix{We note that} two of our 3-cycles might well have disconnected intersection of interiors.}

We conclude this section with a very simple result that illustrates the main idea.

\begin{thm}\label{th:noThreeAtVertex}  Let $n\ge 4$ and let \apriltwosix{$C$ be the simple closed curve bounding} a face of a simple drawing of $K_n$ \apriltwosix{in the sphere}.  If $e_1,e_2,e_3$ are distinct \mayfive{(open) edges incident with a common vertex $v$, then at least one of \apriltwosix{$C\cap e_1$, $C\cap e_2$, and $C\cap e_3$} is empty}.
\end{thm}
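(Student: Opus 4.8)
The plan is to assume for contradiction that all three \emph{open} edges meet $C$ and to show that in fact at most two of them can. By Theorem~\ref{th:connectedIntersection}, for each $i$ the set $\overline{e_i}\cap C$ is either connected or consists only of the two endpoints of $e_i$; since we are assuming the open edge $e_i$ meets $C$, the second alternative is excluded, so $\overline{e_i}\cap C$ is a single arc $\alpha_i$ containing interior points of $e_i$. Walking along $e_i$ from $v$, let $a_i$ be the first point of $\alpha_i$ reached and let $P_i$ be the sub-arc of $e_i$ from $v$ to $a_i$, so $P_i\cap C=\{a_i\}$ (with the degenerate case $P_i=\{v\}$ when $v\in C$ and $a_i=v$). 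Because distinct edges at $v$ meet only at $v$ in a simple drawing, the stems $P_1,P_2,P_3$ are pairwise internally disjoint and $\alpha_i\cap\alpha_j\subseteq\{v\}$. I would then split on whether $v\in C$.

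Suppose first that $v\in C$. Since $C$ is a simple closed curve through the vertex $v$, near $v$ it runs along exactly two edges incident with $v$, say $f$ and $g$, and $R$ occupies the single angular sector at $v$ between $f$ and $g$; no edge-ray can lie inside that sector, since edges do not enter the empty face $R$. Consequently only $f$ and $g$ have initial segments on $C$. Now if some $e_i\notin\{f,g\}$ had its open edge meet $C$, then $\overline{e_i}\cap C$ would contain both the endpoint $v$ and an interior point of $e_i$; being connected, it would then contain an entire initial segment of $e_i$ at $v$, which is impossible because that segment lies in a sector other than the one occupied by $R$. Hence every edge at $v$ meeting $C$ lies in $\{f,g\}$, so at most two of $e_1,e_2,e_3$ meet $C$, giving the contradiction.

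The harder case, and the step I expect to be the main obstacle, is $v\notin C$. Then $v$ lies in the open disc $\mathbb S^2\setminus\overline R$, and since a face contains no part of the drawing, each $P_i\setminus\{a_i\}$ lies in this disc as well. Writing $X=\mathbb S^2\setminus R$ for the complementary closed disc, the internally disjoint stems form a claw from $v$ to $a_1,a_2,a_3\in\partial X=C$, cutting $X$ into three slices. I would relabel so that $e_2$ is the middle edge in the rotation at $v$ (equivalently $a_2$ lies between $a_1$ and $a_3$ on $C$), take the Jordan curve $\Gamma=P_1\cup\rho\cup P_3$ with $\rho$ the arc of $C$ from $a_1$ to $a_3$ avoiding $a_2$, and then locate $v$, the arc $\alpha_2$, and the face $R$ relative to the two discs bounded by $\Gamma$. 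The aim is to show that $e_2$ cannot travel from $v$ out to $\alpha_2$ without either re-crossing $e_1$ or $e_3$ (impossible, as these share the vertex $v$) or forcing an edge across $C$ into the empty face $R$. Making this last step unconditional is the crux: the purely local picture at the crossings $a_i$ does \emph{not} by itself forbid the configuration, so I expect to have to apply Theorem~\ref{th:connectedIntersection} a second time to the edges that supply the arcs of $C$ lying between the $\alpha_i$, and to use that the drawing is of the complete graph, in order to close the argument.
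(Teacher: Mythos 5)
Your first case ($v\in C$) is complete and correct, granted Theorem~\ref{th:connectedIntersection}: the two local branches of $C$ at $v$ lie along exactly two edges incident with $v$, and the connectivity statement of Theorem~\ref{th:connectedIntersection} forces any third edge meeting $C$ to have an initial segment on $C$, which is impossible. The problem is the second case, $v\notin C$, which you yourself identify as the crux: there you only set up the stems $P_i$, the claw, and the Jordan curve $\Gamma=P_1\cup\rho\cup P_3$, and then state the aim, saying you ``expect'' to apply Theorem~\ref{th:connectedIntersection} a second time and to use completeness. No contradiction is actually derived, and the gap is essential rather than presentational. As you correctly observe, the claw configuration by itself is perfectly realizable: $P_2\setminus\{v\}$ simply lies in the $\Gamma$-disc containing $R$ and reaches $a_2$ through the slice of $\mathbb S^2\setminus R$ adjacent to the arc of $C$ through $a_2$, violating nothing. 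The contradiction has to come from parts of the drawing other than $e_1,e_2,e_3$ (the edges supplying the remaining arcs of $C$, or the edges $w_iw_j$ guaranteed by completeness), and the proposal contains no argument that uses them. So the main case of the theorem is missing.

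For comparison, the paper disposes of the whole statement in three lines, with no case split and no appeal to Theorem~\ref{th:connectedIntersection}: the edges $e_1,e_2,e_3$ (that is, the four vertices $v,w_1,w_2,w_3$) induce a simple sub-drawing of $K_4$; the face $R$ bounded by $C$ lies inside a single face $F$ of that sub-drawing, so $C\subseteq\overline F$; hence if all three sets $C\cap e_i$ were non-empty, $F$ would be incident with all three edges at $v$; but in neither of the two simple drawings of $K_4$ in the sphere (crossing-free, or with one crossing) is a face incident with all three edges at a common vertex. This induced-$K_4$ restriction is the same device the paper uses to prove Theorem~\ref{th:connectedIntersection} itself, so your invocation of that theorem, while not circular (its proof is independent of this one), is heavier machinery than needed --- and, as deployed, still insufficient in the case that matters. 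If you want to finish along your lines, the cleanest way to ``close the argument'' in your second case is exactly this restriction to the induced $K_4$, at which point the claw, the curve $\Gamma$, and the case distinction on $v\in C$ all become unnecessary.
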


\begin{proof}  The three edges $e_1,e_2,e_3$ induce a \apriltwothree{simple drawing of} $K_4$.   \apriltwosix{Evidently, the face $R$ bounded by $C$} is contained in one  face of this \apriltwothree{drawing}.  \apriltwosix{As no face} of \apriltwothree{any simple drawing} of $K_4$ is incident with all of $e_1,e_2,e_3$, the result is immediate.  \end{proof}

\apriltwosix{A simple} consequence of Theorems \ref{th:connectedIntersection} and \ref{th:noThreeAtVertex} is that every face boundary of a simple drawing of $K_n$ consists of at most $n$ edge-segments.  

A {\em natural drawing\/} of $K_n$ is a simple drawing that has a Hamilton cycle \apriltwosix{$H$ bounding one face $F$}; it necessarily has the maximum number $\binom n4$ crossings.  For \apriltwosix{this drawing} it is obvious that:
\begin{enumerate}[topsep=0pt,itemsep=-5pt]\item \apriltwosix{$F$ is} incident with $n$ edge segments (in fact $n$ edges); and \item  every edge \apriltwosix{not in $H$} intersects the closure of $F$ in its two ends, as in Theorem \ref{th:connectedIntersection}. 
\end{enumerate}

\section{Proof of Theorem \ref{th:connectedIntersection}}\label{sec:connectedIntersection}

This section contains the proof of Theorem \ref{th:connectedIntersection}.

\begin{proof}\startClaims
We start by changing the drawing slightly so that no three edges are concurrent at a single crossing point.  By small isotopies, we can remove such concurrencies to yield a drawing in which any crossing involves precisely two edges.  Each face boundary of the original drawing has all its segments contained in a face boundary of the new drawing, so it suffices to prove the theorem for the new drawing.

We may assume that $e$  intersects  $C$.  If either there is no \apriltwosix{interval in $e\cap C$ consisting of more than a single point (such an interval is {\em non-trivial\/}) or every non-trivial} interval has both \apriltwosix{end vertices of $e$}, then we are done:  the former implies $e\cap C$ is at most the ends of $e$, \apriltwosix{while} the latter implies $e\cap C=e$. 

In the remaining case, there is a vertex $v$ incident with $e$ such that the first non-trivial interval $I$ \apriltwosix{in $e\cap C$} encountered upon traversing $e$ from $v$ is such that the end \apriltwosix{$x$ of $I$ that is furthest  from $v$ in $e$} is not an end of $e$.     If possible, choose $v$ so that $v\notin C$.  

\apriltwosix{The point $x$ is a crossing of $e$ with an edge $f$; these two edges} induce a $K_4$ with one crossing:  $x$.  The face of $K_n$ bounded by $C$ is contained in a face of the $K_4$ that is incident with the entire interval $I$.  It follows that $C\cap e\subseteq I\cup \{v\}$.  In particular, the other end $w$ of $e$ is not in $C$.

The choice of $v$ \apriltwosix{rather than $w$} implies either $v\in I$ or $v\notin C$.  In either case, $C\cap e=I$, as required.  \end{proof}

\section{Proof of Theorem \ref{th:regionsTriangles}}\label{sec:regionsTriangles}

This section contains the proof of Theorem \ref{th:regionsTriangles}.
\begin{proof}\startClaims
  It is clear that $R\subseteq \bigcap S^R_T$.  To show equality, we proceed by induction on $n$, the cases $n\le 4$ being easy and well-known.  \apriltwosix{Let $v$ be} any vertex of $K_n$.  Induction implies every \apriltwosix{face} of the drawing $D-v$ of $K_{n-1}$ is precisely the intersection of triangular sides.  
  
Let $R_{n-1}$ be the open face of $D-v$ containing $R$ and let $C_{n-1}$ be the simple closed curve bounding $R_{n-1}$.  The vertex $v$ is either in $R_{n-1}$ or not in $R_{n-1}\cup C_{n-1}$.

\begin{clm}\label{cl:atMostOneComponent}
If $e$ is incident with $v$, then $e\cap R_{n-1}$ has at most one component.  

Moreover, if $v$ is not in $R_{n-1}$ and $e\cap R_{n-1}$ has a component, then $e$ consists of the closed arc  from $v$ to the first intersection with $C_{n-1}$, the open arc $e\cap R_{n-1}$, and the closed arc from the other end of $e\cap R_{n-1}$ to the other end of $e$.  The first and last portions are disjoint from $R_{n-1}$. 
\end{clm}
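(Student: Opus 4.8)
The plan is to analyze the open sub-arcs comprising $e\cap R_{n-1}$ by attaching, to each point where $e$ crosses into $R_{n-1}$, a $3$-cycle through the far endpoint of $e$, and exploiting the fact that this $3$-cycle lies in $D-v$ and hence has $R_{n-1}$ entirely on one of its two sides. First I would reduce, exactly as in the proof of Theorem \ref{th:connectedIntersection}, to the case that no three edges are concurrent, so that every crossing is transversal and involves exactly two edges. Write $u$ for the end of $e$ other than $v$; since $u$ is a vertex of $K_{n-1}$ it lies outside the open face $R_{n-1}$. Because $R_{n-1}$ is open and $e$ is a simple arc, $e\cap R_{n-1}$ is a disjoint union of open sub-arcs of $e$; the near endpoint (the one met first when traversing $e$ from $v$) of each such sub-arc is either $v$ itself, in the case $v\in R_{n-1}$, or a point of $C_{n-1}$ at which $e$ passes from the outside of $\overline{R_{n-1}}$ into $R_{n-1}$. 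I will call such a boundary point an \emph{entry}.

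The key step is the following observation about an entry $q$, lying on an edge $cd$ of $K_{n-1}$. As $e$ crosses $cd$, this edge is not incident with $u$, so $c,d,u$ are distinct and the $3$-cycle $T$ on $\{c,d,u\}$ is a genuine triangle drawn inside $D-v$; its three edges pairwise meet only at their common vertices, so $T$ is a simple closed curve disjoint from the face $R_{n-1}$, and hence $R_{n-1}$ lies in exactly one of the two sides of $T$. Now $e$ meets $T$ only at $q$ (a transversal crossing of $e$ with $cd$) and at its endpoint $u$, because $e$ shares the vertex $u$ with both $cu$ and $du$ and crosses $cd$ only once. Consequently the sub-arc $e[v,q)$ lies entirely in one open side of $T$, while $e$ passes to the opposite open side immediately after $q$; since $e$ enters $R_{n-1}$ just past $q$, that opposite side is the one containing $R_{n-1}$. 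Thus $R_{n-1}$ lies in the side of $T$ \emph{not} containing $e[v,q)$, and in particular $v$ (which lies on $e[v,q)$) is separated from $R_{n-1}$ by $T$.

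With this in hand the two parts follow quickly. If $v\in R_{n-1}$, then the component of $e\cap R_{n-1}$ containing $v$ has no entry, and any other component would have one; applying the observation to such an entry would place $v$ on the side of its $3$-cycle opposite to $R_{n-1}$, contradicting $v\in R_{n-1}$, so there is a single component. If instead $v\notin R_{n-1}\cup C_{n-1}$, the first point $p$ of $e\cap C_{n-1}$ is an entry; were there a second component, it would have a later entry $q$, and the observation applied to $q$ would put $p\in e[v,q)$ in the open side of the triangle not meeting $\overline{R_{n-1}}$, whereas $p\in C_{n-1}\subseteq\overline{R_{n-1}}$ --- a contradiction. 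So again there is at most one component, proving the first assertion. Finally, when $v\notin R_{n-1}$ and a component exists, this single open arc is exactly $e\cap R_{n-1}$; its near end is the first intersection $p$ of $e$ with $C_{n-1}$ (before which $e$ stays outside $\overline{R_{n-1}}$, so that closed portion is disjoint from $R_{n-1}$), and after its far end $e$ runs to $u$ without re-entering $R_{n-1}$, there being no second component. This is precisely the decomposition in the ``moreover'' statement.

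The main obstacle --- and the reason for routing the argument through $3$-cycles rather than the $K_4$ device used for Theorems \ref{th:connectedIntersection} and \ref{th:noThreeAtVertex} --- is that $e$ is incident with $v$ and so is \emph{not} an edge of $D-v$: the $K_4$ on $\{v,u,c,d\}$ contains $e$, which cuts through $R_{n-1}$, so $R_{n-1}$ need not lie in a single face of that $K_4$. Replacing the $K_4$ by the triangle on $\{c,d,u\}$, which omits both $e$ and $v$, restores the clean ``one side of a Jordan curve'' dichotomy that drives the proof. The remaining technical point to handle carefully is that an entry is a transversal crossing with a single edge of $C_{n-1}$ (not a corner of $C_{n-1}$), which is exactly what the general-position reduction secures.
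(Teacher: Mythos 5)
Your core argument is correct and is essentially the paper's own proof run in mirror image. The paper takes the first exit point $y_1$ of $e$ from $R_{n-1}$, the edge $f$ crossed there, and the $3$-cycle on $f$ together with the far end $w$ of $e$; you take the entry point of any putative later component, the crossed edge $cd$, and the $3$-cycle on $\{c,d,u\}$. The decisive facts are the same in both: the $3$-cycle lies in $D-v$ and so has $R_{n-1}$ entirely inside one of its sides; $e$ meets it in exactly two points (the crossing and the shared end), so the two pieces of $e$ lie in opposite sides; hence the piece of $e$ beyond the first exit (paper) or before any entry (you) is disjoint from $R_{n-1}$. Your direct Jordan-curve phrasing in place of the paper's appeal to the induced $K_4$ is an inessential difference.

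Where you genuinely deviate is the opening reduction to general position, and that is the one weak point. The reduction is unnecessary for the ``at most one component'' assertion: at an entry $q$ on $cd$, the other two edges $cu$ and $du$ of $T$ cannot pass through $q$ (each shares an end with $cd$ and the drawing is simple), and $e$ crosses $cd$ transversally at $q$ even if further edges of $D-v$ also pass through $q$; so your key observation holds verbatim in an arbitrary simple drawing, which is exactly how the paper manages without any general-position hypothesis. More importantly, the reduction as you state it does not deliver the ``moreover'' part for the original drawing. That part asserts that the near end of $e\cap R_{n-1}$ is the \emph{first} intersection of $e$ with $C_{n-1}$; a priori, in a drawing with concurrent crossings, $e$ could touch $C_{n-1}$ earlier, at a point where $e$ crosses two or more edges of $D-v$ at a corner of $R_{n-1}$, without entering $R_{n-1}$ there. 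Proving the claim for a perturbed drawing says nothing about such a point in the original one: the perturbation changes $R_{n-1}$, $C_{n-1}$, and $e\cap C_{n-1}$ all at once, and a generic resolution of the concurrency can simply erase the touching point rather than expose it. The paper closes exactly this hole in the remark following the claim (credited to Kyn\v cl): perturb $e$ \emph{alone}, pushing it into $R_{n-1}$ near the touching point---this leaves $D-v$, hence $R_{n-1}$ and $C_{n-1}$, unchanged---and the resulting drawing then violates the already-established first part of the claim. If you drop your global reduction (your part-one argument does not need it) and add this one local perturbation of $e$ for the ``moreover'' statement, your proof is complete.
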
 

\begin{proof}
As we traverse $e$ from $v$ to its other end $w$, let $\alpha_1$ be the first component of $e\cap R_{n-1}$; this is an open arc with ends $x_1$ (nearer to $v$ in $e$) and $y_1$ in $C_{n-1}$.  (If $v\in R_{n-1}$, then $x_1=v$.)  Thus, $y_1$ is the first point of $C_{n-1}$ reached by our traversal from within $R_{n-1}$.  

If $y_1=w$, then we are done; in the remaining case, $y_1$ is a crossing of $e$ with an edge $f$ of $D-v$.  The triangle $T$ consisting of $f$ and $w$ has $R_{n-1}$ on the side containing $\alpha_1$.  

On the other hand, let $J$ be the $K_4$ induced by $e$ and $f$.  Then $D[J]$ has $e$ and $f$ crossing and shows that $\alpha_1$ and the portion $\alpha_2$ of $e$ from the crossing with $f$ to $w$ are on different sides of $T$.  Therefore, $\alpha_2$ cannot again cross into $R_{n-1}$.

The moreover assertion is evident from the preceding paragraphs.
\end{proof}

\ignore{ By way of contradiction, assume this is not the case. Let $i$ be minimal such that a drawing $D$ of $k_i$ has two facial regions defined by the same set of triangles, however, every drawing of $K_j$ has no two facial regions defined by the same intersection of triangles, for $j<i$. Clearly, $i>4$ as the theorem is true for every drawing of$K_3$ and $K_4$. By minimality, the two regions are contained in a unique region $\mathcal{C}$ in $K-i-v$, for a $v$ of ones choosing, such that $\mathcal{C}$ is uniquely defined by an intersection of triangles.
 \begin{clm}
Every edge incident to $v$ crosses $\mathcal{C}$ at most once from $Int(\mathcal{C})$ (the side with no vertices) to $Ext(\mathcal{C})$ (the side containing vertices).
 \end{clm}
\begin{proof}
 Suppose some edge incident to $v$ crosses $\mathcal{C}$ from  $Int(\mathcal{C})$ to $Ext(\mathcal{C})$. Consider the first such crossing with edge $f$ on the boundary of $\mathcal{C}$. The $K_4$ induced by $e$ and $f$ contains $\mathcal{C}$ in one facial region and the segment of $e$ after the $(e,f)$-crossing in another. Therefore, after the $(e,f)$-crossing, $e$ can not cross $\mathcal{C}$ again.
\end{proof}
}
If $v\in R_{n-1}$, then the star at $v$ partitions $R_{n-1}$ into $n-1$ sectors.  Each sector is precisely the intersection of the sides of triangles of $K_n$: use the sides of triangles of $D-v$ that determine $R_{n-1}$; and each triangle incident with $v$ has a side containing $R_n$.  The intersection of these sides is exactly $R_n$.

Henceforth, we assume $v\notin R_{n-1}$.  If no edge incident with $v$ has an arc in $R_{n-1}$, then $R_n=R_{n-1}$ and we are done.  Therefore we may assume the set $X$ of edges incident with $v$ and intersecting $R_{n-1}$ is not empty.  For each $e\in X$, let $u^e_1$ and $u^e_2$ \apriltwosix{be the ends $e\cap R_{n-1}$, labelled so that $u^e_1$ comes before $u^e_2$ as} we traverse $e$ from $v$.  Note that $v\notin C_{n-1}$, so the segment $s^e_1$ of $e$ from $v$ to $u^e_1$ is a non-trivial arc, as is the segment $s^e_2$ from $u^e_1$ to $u^e_2$.  Evidently $s^e_1$ is disjoint from $R_{n-1}$, while $s^e_2\setminus\{u^e_1,u^e_2\}= e\cap R_{n-1}$.  

\mayfive{We remark that, in principle, $u_1^e$ need not be the first intersection of $e$---traversed from $u$---with $C_{n-1}$.  However, this can only happen at a point at which three or more edges, including $e$, cross.  We can make a small adjustment of $e$ to obtain a new drawing in which a segment of $e$ near the crossing is also in $R_{n-1}$.  However, this new simple drawing violates Claim \ref{cl:atMostOneComponent}.  Thus, $u_1^e$ and (similarly) $u_2^e$ are the only intersections of $e$ with $C_{n-1}$.  This improvement was suggested by Kyn\v cl.
}


\begin{clm}\label{cl:twoCrossing}
If $e,f\in X$, then $u^e_1,u^e_2,u^f_2,u^f_1$ is the cyclic order of these four points in $C_{n-1}$ (in some orientation of $C_{n-1}$).
\end{clm}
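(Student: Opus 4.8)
The plan is to realise the asserted cyclic order as the unique arrangement of the four points compatible with two independent non-crossing conditions, one read off \emph{inside} the disc $\overline{R_{n-1}}$ and one read off \emph{outside} it. Write $w_e,w_f$ for the ends of $e,f$ other than $v$; since $K_n$ is complete and $e\neq f$ we have $w_e\neq w_f$, so $g:=w_ew_f$ is an edge of $D-v$ and $T:=vw_ew_f$ is a $3$-cycle, hence a simple closed curve. I would use $T$ as the organising object: the two chords $e\cap R_{n-1}$ (ends $u^e_1,u^e_2$) and $f\cap R_{n-1}$ (ends $u^f_1,u^f_2$) are precisely the sub-arcs of $T$ lying in the disc $\Delta:=\overline{R_{n-1}}$.

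First I would record that the four points are distinct: a coincidence among them would be a common point of $e$ and $f$ lying on $C_{n-1}$, but $e\cap f=\{v\}$ and $v\notin C_{n-1}$. Next, the interior condition: because $e\cap f=\{v\}$ and $v\notin\Delta$, the two chords are disjoint, so $\{u^e_1,u^e_2\}$ and $\{u^f_1,u^f_2\}$ do not interleave on $C_{n-1}$. This alone leaves two candidate cyclic orders, so a second constraint is needed.

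The exterior condition is where the real content lies. Let $s^e_1,s^f_1$ be the portions of $e,f$ from $v$ to the first entry points $u^e_1,u^f_1$, and $t^e,t^f$ the post-exit portions from $u^e_2,u^f_2$ to $w_e,w_f$; by Claim \ref{cl:atMostOneComponent} these meet $\Delta$ only at their listed boundary endpoints. Then $\gamma:=s^e_1\cup s^f_1$ is a chord of the exterior disc $\Delta^*:=\mathbb S^2\setminus\operatorname{int}\Delta$ joining $u^e_1$ to $u^f_1$, and I claim $\delta:=t^e\cup g\cup t^f$ is an arc of $\Delta^*$ joining $u^e_2$ to $u^f_2$ that is disjoint from $\gamma$. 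The point making $\delta$ exterior is that $g$, being an edge of $D-v$, cannot meet the open face $R_{n-1}$ of $D-v$; disjointness from $\gamma$ uses only that $e,f$ meet at $v$ and that adjacent edges of a simple drawing share just their common vertex. Since $\gamma$ separates $\Delta^*$ into two discs and $\delta$ is connected and misses $\gamma$, both ends of $\delta$ lie on the same boundary arc; hence $\{u^e_2,u^f_2\}$ and $\{u^e_1,u^f_1\}$ also do not interleave on $C_{n-1}$.

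Finally I would combine the two conditions. Of the three pairings of four distinct cyclic points into two pairs, exactly one is crossing; since neither $\{u^e_1,u^e_2\},\{u^f_1,u^f_2\}$ nor $\{u^e_1,u^f_1\},\{u^e_2,u^f_2\}$ is the crossing pairing, the crossing pairing must be $\{u^e_1,u^f_2\},\{u^e_2,u^f_1\}$, which forces the cyclic order $u^e_1,u^e_2,u^f_2,u^f_1$ up to orientation. The main obstacle is exactly the third step: recognising that interior non-crossing of the two chords is insufficient, and supplying the second, exterior, non-crossing relation through the auxiliary edge $g=w_ew_f$ together with the fact that it cannot enter the face $R_{n-1}$.
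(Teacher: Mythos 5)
Your proof is correct, and it rests on the same two essential ingredients as the paper's: (i) the chords $e\cap R_{n-1}$ and $f\cap R_{n-1}$ are disjoint arcs of the closed disc $\overline{R_{n-1}}$, and (ii) the edge $g=w_ew_f$ of $D-v$, which can cross neither $e$ nor $f$ (adjacency plus simplicity) nor meet the face $R_{n-1}$ of $D-v$. The packaging, however, is genuinely different. The paper rules out the two bad cyclic orders one at a time: interleaving is excluded exactly as in your interior condition, while the order $u^e_1,u^e_2,u^f_1,u^f_2$ is excluded by contradiction --- a closed curve built from initial segments of $e$ and $f$ together with an arc \emph{inside} $R_{n-1}$ separates $w^e$ from $w^f$, and the edge $w^ew^f$ cannot cross it, contradicting the completeness of $K_{n-1}$. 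You instead stay in the complementary disc: the arc $s^e_1\cup s^f_1$ through $v$ and the arc $t^e\cup g\cup t^f$ through $w_e$ and $w_f$ are disjoint and both exterior, which yields the second non-interleaving relation directly, and you finish with the pigeonhole fact that exactly one of the three pairings of four distinct concyclic points is crossing. The two arguments are logically equivalent, but yours is direct rather than by contradiction, and it makes explicit a point the paper treats tersely, namely why the asserted separation of $w^e$ from $w^f$ actually holds (the tails of $e$ and $f$ beyond $u^e_2,u^f_2$ cannot return to $R_{n-1}$). One small attribution slip: the fact that $s^e_1$ and $t^e$ meet $\overline{R_{n-1}}$ only at $u^e_1$ and $u^e_2$ does not follow from Claim \ref{cl:atMostOneComponent} alone, which gives disjointness from the open face $R_{n-1}$; that $u^e_1,u^e_2$ are the \emph{only} intersections of $e$ with the boundary circle $C_{n-1}$ is secured by the adjustment (suggested by Kyn\v cl) recorded in the paragraph immediately preceding the claim, and you need that remark for $s^e_1\cup s^f_1$ to be a genuine chord of the exterior disc.
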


\begin{proof}
If the vertices were interlaced on $C_{n-1}$, then the order would be  $u^e_1,u^f_2,u^e_2,u^f_1$.  Since $R_{n-1}\cup C_{n-1}$ is  a closed disc, $e\cap R_{n-1}$ and $f\cap R_{n-1}$ cross, contradicting the fact that $D$ is simple.

The remaining alternative is that the order is $u^e_1,u^e_2,u^f_1,u^f_2$, as in Figure \ref{fg:badOrder}.  In this case, the non-$v$ ends $w^e$ of $e$ and $w^f$ of $f$ are separated by a simple closed curve $\gamma$ that is composed of: an arc  in $R_{n-1}$ joining a point $x^e$ of $e\cap R_{n-1}$ to a point $x^f$ of $f\cap R_{n-1}$, but otherwise disjoint from $e\cup f$; the arc in $e$ from $v$ to $x^e$; and the arc in $f$ from $v$ to $x^f$.  

Since $D$ is simple and $R_{n-1}$ is a \apriltwosix{face} of $D-v$, the edge $w^ew^f$ does not cross $\gamma$ and so does not occur in $D-v$.  This contradicts the fact that $D-v$ is a simple drawing of $K_{n-1}$.
\end{proof}

\begin{figure}[!ht]
\begin{center}
\includegraphics[scale=.5]{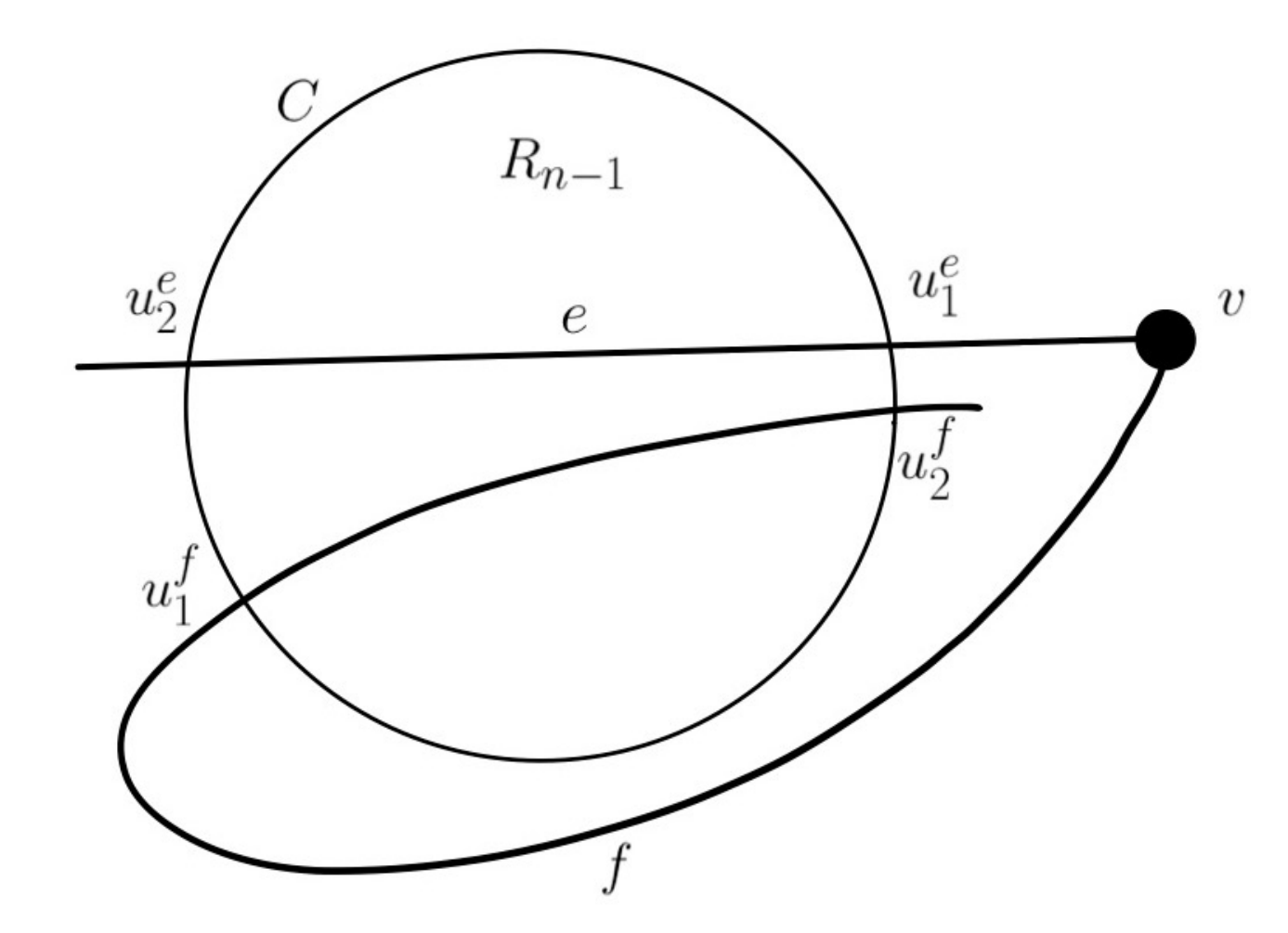}
\caption{The order $u^e_1,u^e_2,u^f_1,u^f_2$ does not occur.}\label{fg:badOrder}
\end{center}
\end{figure}

It follows from Claim \ref{cl:twoCrossing} that the edges of $X$ can be ordered $e_1,e_2,\dots,e_r$ so that the order of the intersections $u^{e_j}_j$ is $u^{e_1}_1,u^{e_2}_1,\dots,u^{e_r}_1,u^{e_r}_2,\dots,u^{e_2}_2,u^{e_1}_2$ and that these edges occur in this order in the cyclic sequence of edges incident with $v$ (although not necessarily consecutively).

For $i=2,3,\dots,r$, the triangle induced by $e_{i-1}$ and $e_i$ intersects $R_{n-1}$ in a \apriltwosix{face} $R^*_i$ of $D$ that is evidently the intersection of sides of the triangles in $K_n$.  It remains only to deal with the two remaining \apriltwosix{face}s, namely the \apriltwosix{face} $R^*_1$ incident only with $e_1$ and an arc in $C_{n-1}$ joining $u^{e_1}_1$ and $u^{e_1}_2$ and the analogous \apriltwosix{face} $R^*_{r+1}$ incident with $e_r$ and an arc in $C_{n-1}$.

\begin{clm}\label{cl:XnotAll}
$|X|<n-1$.   
\end{clm}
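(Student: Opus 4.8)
The plan is to argue by contradiction. Suppose $|X|=n-1$, so that \emph{every} edge incident with $v$ meets $R_{n-1}$. Since $v$ has exactly $n-1$ neighbours, the far ends $w^{e_1},\dots,w^{e_r}$ (with $r=n-1$) are then precisely all the vertices of $K_n$ other than $v$, and the listing $e_1,\dots,e_r$ is the \emph{entire} cyclic sequence of edges at $v$; in particular $e_1$ and $e_r$ are cyclic neighbours at $v$. I would keep the cyclic order of entry/exit points $u^{e_1}_1,\dots,u^{e_r}_1,u^{e_r}_2,\dots,u^{e_1}_2$ on $C_{n-1}$ furnished by Claim \ref{cl:twoCrossing} and concentrate on one extreme edge, say $e_r$, together with the end region $R^*_{r+1}$ that it bounds with the arc $A$ of $C_{n-1}$ running directly from $u^{e_r}_1$ to $u^{e_r}_2$ (carrying no other $u$-point).

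First I would analyse $A$. Its endpoints $u^{e_r}_1,u^{e_r}_2$ are crossings of $e_r$ with edges $f_1,f_2$ of $D-v$ lying on $C_{n-1}$; since $D$ is simple, $e_r$ cannot cross one edge twice, so $f_1\neq f_2$ and $A$ is built from arcs of at least two edges of $D-v$. The crucial leverage is that, because $r=n-1$, \emph{every} endpoint of every boundary edge of $R_{n-1}$ is one of the far ends $w^{e_i}$, and hence is joined to $v$ by an edge of $X$ that itself dips into $R_{n-1}$ (by Claim \ref{cl:atMostOneComponent}, in a single arc). I would use this to fix a vertex $a$ incident with $f_1$ and consider the edge $e_i=va\in X$ alongside $e_r$.

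The contradiction I would then extract mirrors the separating-curve device of Claim \ref{cl:twoCrossing}: from an initial arc of $e_r$, an initial arc of $e_i$, and a short arc inside $R_{n-1}$ joining a point of $e_r\cap R_{n-1}$ to a point of $e_i\cap R_{n-1}$, I build a simple closed curve $\gamma$ meeting $e_r\cup e_i$ only in those arcs. Since $R_{n-1}$ is a single face of the simple drawing $D-v$ and $\gamma$ is otherwise disjoint from $D-v$, I would show that $\gamma$ forces either a boundary edge of $R^*_{r+1}$ to re-enter $R_{n-1}$ or two edges of $D$ to cross twice, both impossible. The main obstacle is precisely this last step: the purely topological bookkeeping needed to confirm that the extreme edge $e_r$, its partner $e_i$, and the boundary edges of $A$ are positioned so that $\gamma$ genuinely separates the relevant far ends. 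Unlike Claim \ref{cl:twoCrossing}, I must here track the full cyclic order of all $r$ chords in $R_{n-1}$ and the placement of the $w^{e_i}$ relative to $\gamma$, and argue that the hypothesis $r=n-1$ leaves no consistent way to realise the arc $A$ by edges of $D-v$ whose endpoints are already committed as neighbours of $v$. I expect the cleanest route is to isolate the extreme region $R^*_{r+1}$ and derive the contradiction entirely from the incompatibility of $A$'s bounding edges with $e_r$ and its partner.
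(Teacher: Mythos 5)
Your setup coincides with the paper's: assume $|X|=n-1$, observe that the far ends of the edges in $X$ are then \emph{all} the vertices other than $v$, focus on an extreme edge of the nested family (your $e_r$, the paper's $e_1$), look at the edge $f$ of $D-v$ crossed at its entry point, use the fact that an end of $f$ is joined to $v$ by some $e_i\in X$, and build a separating curve $\gamma$ from the two initial segments plus an arc in $R_{n-1}$. Up to that point you are reconstructing the paper's argument.

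However, the step you yourself flag as ``the main obstacle'' is exactly where the proof lives, and your plan for it points in a direction that does not work as stated. Two concrete issues. First, the choice of \emph{which} end of $f_1$ to call $a$ is not free: the paper orients $f$ to agree with the orientation of $C_{n-1}$ at the crossing (pointing from $u^{e_1}_1$ toward the arc containing the other entry points $u^{e_j}_1$), takes $w$ to be the end of $f$ in \emph{that} direction, and takes $x\in f\cap C_{n-1}$ just past the crossing. With the opposite end, the connecting arc along $f$ passes through the crossing point $u^{e_1}_1$, which lies \emph{on} $\gamma$, and no contradiction arises (alternatively, the two points one wants separated end up on the same side of $\gamma$). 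Second, the contradiction is not that ``a boundary edge of $R^*_{r+1}$ re-enters $R_{n-1}$'' (edges of $D-v$ can never meet the face $R_{n-1}$), nor does one need to track the full cyclic order of all $r$ chords or the locations of the far ends $w^{e_j}$. The paper's device is a single arc $\alpha$: the piece of $e_i$ from $u^{e_i}_2$ to $w$ followed by the piece of $f$ from $w$ to $x$. Since $\gamma$ meets $C_{n-1}$ only at $u^{e_1}_1$ and $u^{e_i}_1$, it separates $x$ from $u^{e_i}_2$, so $\alpha$ must cross $\gamma$; but $\alpha$ is disjoint from $R_{n-1}$ (Claim \ref{cl:atMostOneComponent} for $e_i$, and $f$ is an edge of $D-v$), $e_i$ cannot cross itself or $e_1$ (common end $v$), and $f$ can cross $e_1$ only at $u^{e_1}_1$ and cannot cross $e_i$ at all (common end $w$). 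The elegance is precisely that one never needs to know where $w$ is --- the ``placement of the $w^{e_i}$'' bookkeeping you propose to carry out is the difficulty the paper's construction is designed to avoid, and without the directed choice of $w$, $x$, and the arc $\alpha$, your outline does not yet close.
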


\begin{proof}  Suppose to the contrary that $|X|\ge n-1$.  Since there are only $n-1$ edges incident with $v$, $|X|=n-1$.  In the ordering $e_1,\dots,e_r$ of the edges of $X$ from the two paragraphs preceding the statement of this claim, $r=n-1$.  

Choose the traversal of $C_{n-1}$ to produce the cyclic order   $u^{e_1}_1,u^{e_2}_1$, $\dots,u^{e_{n-1}}_1,u^{e_{n-1}}_2$, $\dots,u^{e_2}_2,u^{e_1}_2$.  Let $f$ be the edge of $D-v$ that contains $u^{e_1}_1$.  The orientation of $C_{n-1}$ (from $u^{e_1}_1$ to $u^{e_2}_1$) induces an orientation of $f$ to match.  Let $x$ be a point of $f\cap C_{n-1}$ just after $u^{e_1}_1$ and let $w$ be the end of $f$ in that same direction.

\mayfive{There is an $i$ such that $vw=e_i$.  \maysix{Although we don't know precisely where $w$ is located, $f$ crosses into one side of  $\gamma$ at $u_1^{e_1}$, while $u^{e_i}_2$ is on the other side of $\gamma$.  Portions of $e_i$ and $f$ connect these through $w$.}  More precisely, let $\alpha$ be the arc that is the union of the  arc $e_i$ from $u^{e_i}_2$ to $w$ and the arc in $f$ from $w$ to $x$.  Let $\gamma$ be the simple closed curve that is the union of the portions of $e_1$ and $e_{i}$ from $v$ to $u^{e_1}_1$ and $u^{e_{i}}_1$, respectively, and an arc in $R_{n-1}$ joining $u^{e_1}_1$ and $u^{e_{i}}_1$.  See Figure \ref{fg:notSimple}.}

\begin{figure}[!ht]
\begin{center}
\includegraphics[scale=.6]{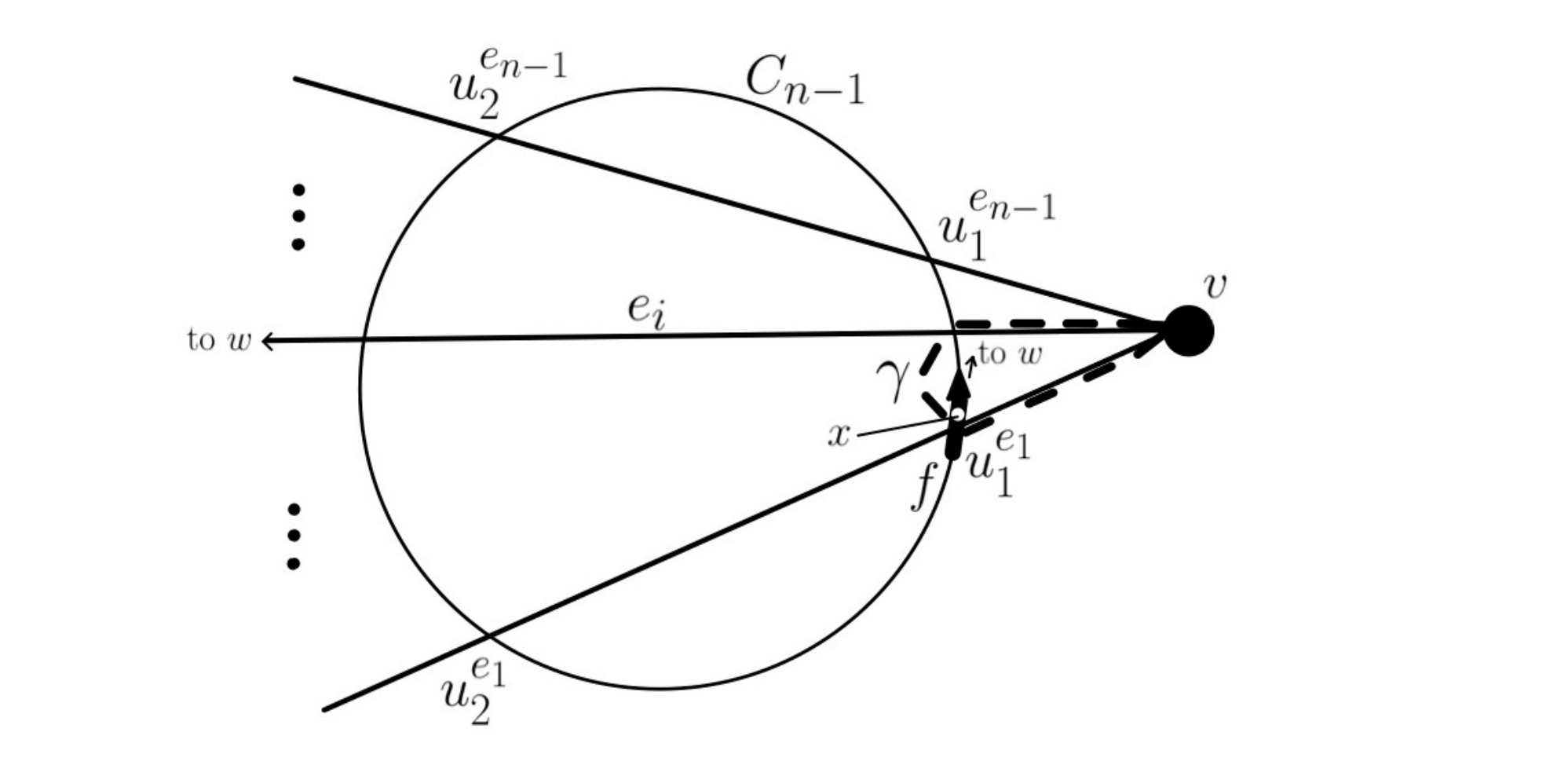}
\caption{The location of $e_i=vw$, $f$, and $\gamma$.}\label{fg:notSimple}
\end{center}
\end{figure}

\mayfive{Evidently, $\gamma$ separates $x$ from $u^{e_i}_2$, so $\alpha$ crosses $\gamma$.  As $\alpha$ is disjoint from $R_{n-1}$, $\alpha$ intersects either $e_1$ or $e_{i}$.  Because the drawing is simple, it is not the arc in $\alpha$ contained in $vw$ that intersects either $e_1$ or $e_{i}$.} 

\mayfive{Because $D$ is simple, $f$ cannot cross $e_1$ at a point other \maysix{than $u^{e_1}_1$}.  Since $f$ and $e_i$ have the common end $w$, the portion of $\alpha$ contained in $f$ also does not intersect either $e_1$ or $e_i$, a contradiction.} \end{proof}

\ignore{Let $X=\{e_1,\cdots, e_r\}$ be the set of edges incident to $v$ that cross $C_{n-1}$ twice. Let $x_{j,k}$ be the $k$th $(x_j,\mathcal{C})$-crossing from $v$ on $x_j$. If for some $a,b,c
\in [r]$ we have the order of intersections on the boundary of $\mathcal{C}$ is $x_{a,1},x_{b,2},x_{c,1},x_{c,2},x_{b,1},x_{a,2}$, then $\mathcal{C}$ along with the $K_4$ involving $v,a,b$ and $c$ is not realizable as a simple drawing. It follows for each fixed value of $k$, the $x_{j,k}$ appear consecutive on the boundary of $\mathcal{C}$.
\\
\\
Yet again, the edges in $X$ partition $\mathcal{C}$ and each \apriltwosix{face} is uniquely defined as before, except the facial \apriltwosix{face}s inside $\mathcal{C}$ that have one edge incident to $v$ in their boundary, which we will not call the ends of $\mathcal{C}$. 
\\
\\
Let the edges in the boundary of the ends that are incident to $v$ be $e_1$ and $e_2$. If some edge $f$ incident to $v$ is not in $X$, then the triangle induced by $e_1$ and $f$ (similarly $e_2$ and $f$) separetes both ends and we are done. Therefore, all we need to show is that some edge incident to $v$ is not in $X$.
\\
\\
Let $g_i$ be the first edge that $e_i$ crosses on $\mathcal{C}$. $g_i$ crosses into a closed curve not containing $Int(\mathcal{C})$ defined by $v,e_1,(e_1,g_1)$-crossing, a path on $\mathcal{C},(e_2,g_2)$-crossing, and $e_2$. If an end, say $y$, of $g_1$ is contained inside this closed curve, then the edge $vy$ does not cross $\mathcal{C}$ and we are done. Therefore, $g_1$ crosses this closed curve at $e_2$. 
\\
\\
At this point, it is important to note that $e_1$ and $e_2$ are the first and last intersections of the $x_{j,1}$ as they bound the ends of $\mathcal{C}$. Since $e_2$ is the last edge in $X$ that crosses $\mathcal{C}$, we have that $g_1$ crosses every edge in $X$. Since this drawing is simple, it follows that $vg_1$ is not in $X$, as desired. }

The proof is completed by letting $w$ be a vertex such that $vw$ does not cross $R_{n-1}$.  The triangle induced by $e_1$ and $w$ has $R^*_1$ on one side and the rest of $R_{n-1}$ on the other.  Therefore, $R^*_1$ (and symmetrically $R^*_{r+1}$) are precisely the intersections of sides of triangles of $K_n$.
\end{proof}

\end{document}